\def\az{\alpha}
\def\dist{{\mathop\mathrm{\,dist\,}}}
\def\loc{{\mathop\mathrm{\,loc\,}}}
\def\ez{\epsilon}
\def\gz{{\gamma}}
\def\bbint{{\ifinner\rlap{\bf\kern.35em--}
\hspace{0.078cm}\int\else\rlap{\bf\kern.45em--}\int\fi}\ignorespaces}
\def\diam{{\mathop\mathrm{\,diam\,}}}
\newtheorem{thm}{Theorem}[section]
\newtheorem{lem}[thm]{Lemma}
\newtheorem{prop}[thm]{Proposition}
\newtheorem{cor}[thm]{Corollary}
\newtheorem{defn}[thm]{Definition}
\numberwithin{equation}{section}
\theoremstyle{remark}
\newtheorem{rem}[thm]{Remark}
\def\bint{{\ifinner\rlap{\bf\kern.35em--}
\int\else\rlap{\bf\kern.45em--}\int\fi}\ignorespaces}
\newcommand{%
	\def\svgwidth{200 pt}
	\import{./}{.pdf_tex}
}[1]{%
	\def\svgwidth{200 pt}
	\import{./}{#1.pdf_tex}
}
\title[A growth estimate for  planar Mumford--Shah minimizers]{A growth estimate for the   planar Mumford--Shah minimizers at a tip point: An alternative proof of David--L\'eger}
\author{Yi Ru-Ya Zhang}
\date{\today}
\address{Academy of Mathematics and Systems Science, the Chinese Academy of Sciences, Beijing 100190, China}
\email{yzhang@amss.ac.cn}
\thanks{The  author is funded by National Key R\&D Program of China (Grant No. 2021YFA1003100), NSFC grant No. 12288201 and  the Chinese Academy of Sciences.}
\subjclass[2020]{49Q20, 49N60}
\keywords{Mumford--Shah problem, John domain, tip points}
\begin{document}

\begin{abstract}
Let $\Omega\subset \mathbb R^2$ be a bounded domain and $u\in SBV(\Omega)$ be a local minimizer of the Mumford--Shah problem in the plane, with $0\in \overline{S}_u$ being a tip point and $B_1\subset \Omega$. Then  there exist  absolute constants $C>0$ and $0<r_0<1$ such that  
$$|u(x)-u(0)|\le C r^{\frac 1 2} \quad \text{ for any } \ x\in B_r \ \text{ and } \ 0<r<r_0. $$
This estimate is a local version of the original one in \cite[Proposition 10.17]{DL2002}. 

Our result is based on a dichotomy  and the John structure  of $\Omega\setminus \overline{S}_u$,   different from the one by David--L\'eger \cite{DL2002} or Bonnet--David \cite[Lemma 21.3]{BD2001}. 
\end{abstract}


\maketitle
\section{Introduction}
The Mumford--Shah functional, introduced by Mumford and Shah in \cite{MS1989}, is a well--known model in image processing. In their seminal paper \cite{DCL1989}, De Giorgi, Carriero, and Leaci established the existence of minimizers for a weaker formulation of the Mumford--Shah problem through direct methods, drawing on a lower semicontinuity result by De Giorgi and Ambrosio \cite{DA1988}.

To be more specific, for any bounded domain $\Omega\subset \mathbb R^n$, it was introduced in \cite{DA1988}  a subspace of  $BV(\Omega)$, denoted by  $SBV(\Omega)$, in which the functions only  has jump discontinuities (see Section 2 for more details). Then for a function $u\in SBV( \Omega)$, its $\lambda$-Mumford--Shah energy on an open set $\Omega \subseteq \mathbb{R}^n$ is defined by
$$
MS_\lambda(u,\,\Omega):= \int_{\Omega}{\left| D u \right|}^2\, dx+ \lambda  \mathcal{H} ^{n-1}\left( S_u \cap \Omega\right),
$$
where $\lambda>0$ and $S_u\subset \Omega$ is the set of discontinuity points of $u$. 
A function $u\in SBV_\loc (\Omega)$ is a local $\lambda$-minimizer  if for any $x\in \Omega$ with $B_r(x)\subset \Omega,\,r>0$ and every open set $U\subset\subset \Omega\cap B_r(x)$, we have $MS_\lambda(u,\,U)<\infty$ together with
$$MS_\lambda(u,\,U)\le MS_\lambda(v,\,U),$$
whenever $\{u\neq v\}\subset\subset U. $ 

The Euler--Lagrange equation \cite[Theorem 7.35]{AFP2000} for  a local minimizer is that, for any $\eta\in C_0^1(\Omega;\, \mathbb R^n),$
\begin{equation}\label{EL}
\int_{\Omega\setminus\overline{S}_u} |Du|^2 {\rm div} \eta - 2 \langle \nabla u, \nabla u\cdot \nabla\eta\rangle \, dx +\lambda \int_{S_u} {\rm div}_\tau \eta \, d\mathcal H^{n-1}=0, 
\end{equation}
where ${\rm div}_\tau$ denotes the tangential divergence. 
This in particular implies that $u$ is harmonic in $\Omega\setminus \overline{S}_u$ and satisfies the zero Neumann boundary value condition on both sides of $\overline{S}_u$; see \cite[(7.42)]{AFP2000}. Moreover the (weak) mean curvature of $\overline{S}_u$ equals to the jump of the gradient $[|Du|^2]^{\pm}$, according to  \cite[Theorem 7.38]{AFP2000}.

Ambrosio, Fusco and Pallara proved in \cite{AFP19971, AFP19972} that, when $\Omega\subset \mathbb R^n$ and $u\in SBV(\Omega)$ is a local Mumford--Shah minimizer, there exists a subset $\Sigma\subset \overline S_{u}$, which is relatively closed and $\mathcal H^{n-1}(\Sigma)=0$, such that
the set $\overline S_{u}\setminus \Sigma$ is the union of $C^{1,\,\frac 1 4}$-hypersurfaces. Moreover, both $u$ and $Du$ have a H\"older continuous extension to $\overline S_{u}\setminus \Sigma$. 
For more details, see also \cite[Theorem 8.1]{AFP2000}, along with the survey \cite{F2003}, and \cite{AFH2003, DFR2014, L2011} for more recent results.

Nevertheless, much more is known about the planar case. In particular, in an earlier result \cite{B1996}, Bonnet proved that an isolated connected component of $\overline{S}_u$ is a finite union of $C^{1,\,1}$-arcs. His result is based on the so--called Bonnet's monotonicity formula, which is applicable when the discontinuity set in the plane is connected. Later, David \cite{D1996} demonstrated a version of $\ez$-regularity for the minimizers, and many additional results regarding Mumford--Shah local minimizers in the plane were established by Bonnet and David in \cite{BD2001}.

In particular, in the monograph \cite[Theorem 69.29]{D2006}, David proved that if 
$\overline{S}_u$ at a tip point $x\in \overline{S}_u\setminus  {S}_u$ with $B(x,\,2)=:B_2(x)\subset \Omega$,   is sufficiently close to a single radius in the Hausdorff distance within   (which implies $\overline{S}_u$ is connected in $B_1(x)$ \cite[Lemma 69.8]{D2006}), then $\overline{S}_u$ is locally $C^{1,\,1}_\loc$ in $B_1(x)$. Recently, this result was independently improved by \cite{AM2019} and \cite{DFG2021}, showing that one actually obtains $C^{1,\,1}$ (and even $C^{2,\,\az}$)-regularity up to the end point of $\overline{S}_u$, again under the assumption that $\overline{S}_u\cap B_2(x)$ is sufficiently close to a single radius in the Hausdorff distance.

The planar results mentioned above generally rely on the a priori assumption that $\overline {S}_u$ is connected. In this paper, we examine the growth of $u$ near a tip point \emph{without} assuming the connectedness of the discontinuity set. This estimate serves as a localized version of the original estimate for  global minimizers presented in \cite[Proposition 10.17]{DL2002}, via a completely different argument; readers may also refer to  \cite[Proposition 4.6.1]{DF2023}.

\begin{thm}\label{main thm}
Let $\Omega\subset \mathbb R^2$ be a bounded domain, $u\in SBV(\Omega)$ be a local minimizer for the Mumford--Shah problem, and $0\in \Omega$ be a tip point, i.e. $0\in \overline S_{u}\setminus S_u$. Suppose that $B_1\subset \Omega$. Then there exist absolute constants $C>0$ and $0<r_0<1$ so that, for any $0<r<r_0$, one has
$$|u(x)-u(0)|\le C r^{\frac 1 2} \quad \text{ for any } \ x\in B_r. $$

\end{thm}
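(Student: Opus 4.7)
The plan is to combine a scale-$r$ energy bound, a John-domain description of $\Omega\setminus\overline{S}_u$ near $0$, and a dyadic chain argument that exploits the harmonicity of $u$ on each component of $B_r\setminus\overline{S}_u$. I begin with the standard scaling bound
\begin{equation*}
\int_{B_r}|\nabla u|^2\, dx + \lambda\,\mathcal H^1(\overline{S}_u\cap B_r)\le C_0 r
\end{equation*}
for every $r<r_0$. This follows from the usual Mumford--Shah competitor argument: cut $B_r$ along a suitable diameter, replace $u$ on each half by a harmonic extension (paying at most $2\lambda r$ in additional jump length), and invoke minimality of $u$. Combined with the Euler--Lagrange equation \eqref{EL}, this asserts that $u$ is harmonic in each component of $B_r\setminus\overline{S}_u$ with zero Neumann trace on both sides of $\overline{S}_u$, and that its Dirichlet energy scales linearly in $r$, which is the ``right'' scaling for $r^{1/2}$-growth.

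Next I would extract a uniform John-domain structure of $\Omega\setminus\overline{S}_u$ near $0$. Exploiting Ahlfors regularity of $\overline{S}_u$ at the tip together with the minimality-forced non-degeneracy of its complement, I aim to show that at every sufficiently small scale $r$ there is a distinguished component $V_r$ of $B_r\setminus \overline{S}_u$ of planar measure comparable to $r^2$ and a center $x_r\in V_r$ with $|x_r|\sim r$ and $\dist(x_r,\overline{S}_u)\gtrsim r$, such that every point of $V_r$ can be joined to $x_r$ by a John carrot with a John constant independent of $r$. The dichotomy enters here: either (i) $B_r\setminus\overline{S}_u$ has one dominant John component (the cracktip-like regime) and $V_r$ is it, or (ii) the complement splits into several sizeable pieces, in which case the extra $\mathcal H^1$-mass of $\overline{S}_u$ that this forces, combined with the linear energy bound, makes such configurations self-correcting across scales and still permits a valid choice of $x_r$ after absorbing a controlled loss into the constants.

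Given the John structure, the oscillation control comes from a dyadic chain argument. For $x\in B_r\cap V_r$, follow a John carrot from $x$ to $x_r$ and decompose it into segments, the $k$-th of which lies in a ball $B(y_k, C\rho_k)\subset V_r$ of radius $\rho_k\sim 2^{-k}r$. Since $u$ is harmonic on $V_r$, interior mean-value estimates together with the scale-$\rho_k$ energy bound produce
\begin{equation*}
|u(y_k)-u(y_{k+1})|\le C\left(\int_{B(y_k,C\rho_k)}|\nabla u|^2\,dx\right)^{1/2}\le C\sqrt{\rho_k},
\end{equation*}
and summing the geometric series yields $|u(x)-u(x_r)|\lesssim \sqrt r$. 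Iterating the same estimate along the dyadic sequence of centers $x_{2^{-k}r}\to 0$ gives $|u(x_r)-u(0)|\le C\sum_{k\ge 0}\sqrt{2^{-k}r}\lesssim \sqrt r$, where $u(0)$ is interpreted as the common limit of $u$ along any John chain into $0$ (well defined via the H\"older continuous extension of Ambrosio--Fusco--Pallara). Points $x\in \overline{S}_u\cap B_r$ are treated by taking one-sided traces, and points lying in other components of $B_r\setminus \overline{S}_u$ are handled by running the same chain argument component by component.

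The main obstacle I expect is the John-structure step. Without assuming connectedness of $\overline{S}_u$, small parasitic components of $\overline{S}_u$ accumulating near $0$ could in principle pinch $B_r\setminus\overline{S}_u$ and degrade the John constant; the dichotomy is precisely the device designed to rule out such pathologies by confronting them with the linear-in-$r$ energy bound. Carrying out this case analysis so that the resulting John constant is genuinely uniform across all small scales $r$, and choosing the centers $x_r$ coherently so that the dyadic chain argument actually closes, is where the bulk of the technical effort will sit.
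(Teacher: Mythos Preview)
Your chain argument on a single John carrot is essentially the paper's Lemma~\ref{growth of u}, and the linear energy bound is Lemma~\ref{A reg}, so those pieces are sound. The gap is in what you call ``the dichotomy'' and in how you treat multiple components. The paper's dichotomy (Proposition~\ref{dichotomy}) is \emph{not} a geometric one-versus-many-components alternative; it is the statement that $x\in\overline{S}_u$ is a tip point if and only if $\Phi_u(x,0^+)=\limsup_{r\to 0}\bigl(r^{-1}\int_{B_r}|Du|^2\bigr)\big/\bigl(\inf_c\,|B_r|^{-1}\int_{B_r}|u-c|^2\bigr)=+\infty$, proved via David's $\varepsilon$-regularity. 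Its role is Corollary~\ref{limit tip}: since $\Phi_u$ is upper semicontinuous, limits of tip points are tip points, and by compactness there exists $r_2>0$ such that every component $V$ of $B_1\setminus\overline{S}_u$ with $0\notin\partial V$ satisfies $\dist(0,V)\ge r_2$. Hence for $r<r_2$ \emph{every} component of $B_r\setminus\overline{S}_u$ has $0$ on its boundary.

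This is precisely what your argument needs and does not supply. Your line ``points lying in other components are handled by running the same chain argument component by component'' only bounds the oscillation \emph{within} each component; it never links those values to $u(0)$ unless $0\in\partial V$. Your proposed alternative---dominant component versus several sizeable pieces, ``self-correcting via extra $\mathcal H^1$-mass''---does not exclude small parasitic islands of $\Omega\setminus\overline{S}_u$ accumulating at $0$ with boundaries that miss $0$; Ahlfors regularity alone permits this. Likewise, your dyadic iteration $x_{2^{-k}r}\to 0$ requires consecutive centers to lie in the same component (or at least to be joinable by a carrot avoiding $K$), and you give no mechanism for choosing them coherently. The paper sidesteps both issues: once Corollary~\ref{limit tip} forces $0\in\partial W_{j,r}$ for each of the at most $N(J)$ John subdomains covering $B_r\setminus\overline{S}_u$ (Lemma~\ref{local john decompose}), a single application of the Morrey estimate on each $W_{j,r}$ yields $|u(x)-u(0)|\le Cr^{1/2}$ directly, with $u(0)$ well defined because a tip point is by definition a Lebesgue point of $u$---not via the Ambrosio--Fusco--Pallara H\"older extension you invoke, which applies only on the regular part of $\overline{S}_u$.
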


Our estimate is not straightforward because $u\in SBV(\Omega)$ might not satisfy a Poincar\'e-type inequality, even if we can estimate the growth  of the $L^2$-norm of $Du$ within every disk (see Lemma~\ref{A reg}). The discontinuity of $u$ poses a significant challenge. 

It is worth noting that the harmonic conjugate $v$ of $u$ can be defined with H\"older continuity  exponent $\frac 1 2$. This modulus of continuity  follows directly from Lemma~\ref{A reg} as $v\in W^{1,\,2}(\Omega);$ see e.g. \cite[Proposition 4.5.1]{DF2023}. 

\begin{rem}
Some readers might compare our proof with the proof of Lemma 21.3 in \cite{BD2001}. However, it is important to note that they are not the same. 

Firstly, our estimate is directly based on an Ahlfors regularity lemma (see Lemma~\ref{A reg} below), which provides an $L^2$-estimate of $|Du|$. In contrast, the proof of Lemma 21.3 in \cite{BD2001} relies on an  $L^\infty$-estimate, as seen in equation \cite[(21.1)]{BD2001}. 

Secondly, the proof of Bonnet--David assumes that $x$ and $0$ are in the same component of $B\setminus \overline{S}_u$ (see \cite[(21.2)]{BD2001}). However, this assumption is addressed in our work through Corollary~\ref{limit tip} below. 

More precisely, our Lemma~\ref{growth of u} and Lemma~\ref{local john decompose} may share some similarities with the proof of \cite[Lemma 21.3]{BD2001}. To some extent, Lemma~\ref{growth of u} can be seen as an  $L^2$-version of \cite[(21.14), Lemma 21.3]{BD2001}. Similarly, the finiteness of $N$ in Lemma~\ref{local john decompose} is related to the finiteness of the set $E$ in the proof of \cite[Lemma 21.3]{BD2001}. To handle the  $L^2$-integral directly, one needs to use carrot John subdomains (or Boman chains). Therefore,  we  must demonstrate that one can cover $B_r\setminus \overline{S}_u$ with a uniformly finite number of carrot John subdomains whose closures contain $0$, provided that $r$ is smaller than some uniform constant $r_0>0$. 
This part serves as an application of a recent work in \cite[Theorem 1.7]{SZ2024}; it was somehow implicitly used in  \cite[Lemma 21.3]{BD2001}. 
\end{rem}

The proof of the theorem relies on the dichotomy in Proposition~\ref{dichotomy}, which offers a criterion for distinguishing between tip points and jump points. We will defer the proof of this proposition to Section~\ref{Preliminaries}.

\begin{prop}\label{dichotomy}
Let $u\in SBV(\Omega)$ be a local minimizer in the plane, which is not locally constant, and $x\in K:= \overline S_{u}$. 
We define for $r>0$ with $B_r\subset \Omega$, 
$$\Phi_u(x,\,r):=   \frac{r^{-1} \int_{B_r} |Du|^2\, dx}{\inf_{c\in \mathbb R} \bbint_{B_r}|u-c|^2 \,dx}.$$
Then 
  $x$ is a jump point if and only if
\begin{equation}\label{poincare}
\Phi_u(x):=\Phi_u(x,\,0^+):=  \limsup_{r\to 0^+}  \frac{r^{-1}\int_{B_r} |Du|^2\, dx}{\inf_{c\in \mathbb R} \bbint_{  B_r}|u-c|^2 \,dx} \quad \text{ is finite}.
\end{equation}

In other words, 
$x$ is a tip point if and only if 
$
\Phi_u(x,\,0^+)=+\infty.
$
\end{prop}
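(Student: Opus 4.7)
The plan is to prove the two directions of the equivalence separately. The forward direction ($x\in S_u\Rightarrow\Phi_u(x,0^+)<\infty$) is comparatively soft: at a jump point the two approximate one-sided limits $u^\pm(x)$ exist and are distinct, and the standard $SBV$ identities
\[
\lim_{r\to 0^+}\bbint_{B_r^\pm(x)}|u-u^\pm(x)|^2\,dy=0
\]
yield
\[
\inf_{c\in\mathbb R}\bbint_{B_r(x)}|u-c|^2\,dy \;\ge\; \tfrac14(u^+(x)-u^-(x))^2-o(1),
\]
bounded below by a positive constant for small $r$. Meanwhile, Lemma~\ref{A reg} supplies the linear upper bound $r^{-1}\int_{B_r(x)}|Du|^2\le C$; dividing gives $\Phi_u(x,r)\le C$ uniformly in $r$, so $\Phi_u(x,0^+)<\infty$.

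For the converse (tip point $\Rightarrow \Phi_u(x,0^+)=+\infty$) I would argue by contradiction via a blow-up. Suppose $x\in\overline S_u\setminus S_u$ and yet $\Phi_u(x,r_k)\le M$ along some $r_k\to 0$. Since $x$ is a point of approximate continuity of $u$, $\alpha_k^2:=\inf_c\bbint_{B_{r_k}(x)}|u-c|^2$ tends to $0$, and the rescaled functions
\[
\tilde u_k(y):=\frac{u(x+r_k y)-c_k}{\alpha_k},\qquad y\in B_1,
\]
satisfy $\bbint_{B_1}|\tilde u_k|^2=1$ together with
\[
\int_{B_1}|D\tilde u_k|^2\,dy \;=\; \alpha_k^{-2}\int_{B_{r_k}(x)}|Du|^2\,dy \;=\; r_k\,\Phi_u(x,r_k)\;\le\;Mr_k\;\longrightarrow\;0.
\]
A standard truncation of $u$ at levels $\pm T\alpha_k$ (which decreases both Mumford--Shah terms and is hence compatible with minimality) supplies a uniform $L^\infty$-bound on $\tilde u_k$; combined with the two-sided Ahlfors control of $\mathcal H^1(S_{\tilde u_k}\cap B_1)$, Ambrosio's $SBV$ compactness extracts a subsequential $L^1$-limit $\tilde u_\infty\in SBV(B_1)$ with $D\tilde u_\infty\equiv 0$, $\bbint_{B_1}|\tilde u_\infty|^2=1$, and $0\in\overline S_{\tilde u_\infty}$ by the Ahlfors lower bound and lower semicontinuity of $\mathcal H^1$. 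Thus $\tilde u_\infty$ is a nontrivial piecewise constant function whose jump set accumulates at $0$, producing two distinct approximate one-sided limits of $u$ at $x$ after undoing the rescaling, and contradicting $x\notin S_u$.

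The delicate step is the blow-up analysis, specifically ensuring that $0$ is a genuine jump point of $\tilde u_\infty$ and not a mere accumulation of infinitesimal jumps of amplitude tending to $0$. Nontriviality of the limit is built in by the normalization $\bbint|\tilde u_k|^2=1$; the jump structure at the origin uses the planar $SBV$ structure theorem for piecewise constant functions, whose jump set is locally rectifiable and separates level sets into open components, together with the Ahlfors lower bound $\mathcal H^1(S_{\tilde u_k}\cap B_\rho)\ge c\rho$ for every small $\rho>0$, which survives the passage to the limit by lower semicontinuity of $\mathcal H^1$.
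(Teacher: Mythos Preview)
Your forward direction (jump point $\Rightarrow \Phi_u(x,0^+)<\infty$) is correct and is simply the contrapositive of the paper's argument for that implication; both rest on the Ahlfors upper bound for the numerator and on the denominator being bounded away from $0$ at a jump point.

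The converse, however, has a genuine gap in the final contradiction. After normalizing by $\alpha_k\to 0$, a jump of size $O(1)$ in the blow-up limit $\tilde u_\infty$ corresponds only to an oscillation of size $O(\alpha_k)\to 0$ for $u$ in $B_{r_k}(x)$. That is perfectly compatible with $x$ being a Lebesgue point of $u$, so ``undoing the rescaling'' does not produce two distinct approximate one-sided limits for $u$, and no contradiction with $x\notin S_u$ follows. There are secondary problems too: truncating $u$ at levels $c_k\pm T\alpha_k$ yields a competitor with no larger Mumford--Shah energy, but minimality only gives equality of energies, not $|u-c_k|\le T\alpha_k$; and the semicontinuity of $u\mapsto\mathcal H^1(S_u)$ in Ambrosio's compactness is \emph{lower} semicontinuity, which goes the wrong way for concluding $0\in\overline{S_{\tilde u_\infty}}$ from the Ahlfors lower bound on $S_{\tilde u_k}$ (the rescalings $\tilde u_k$ are not Mumford--Shah minimizers, so the usual Hausdorff stability of $K$ is not available either).

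The paper avoids all of this with a one-line use of David's $\epsilon$-regularity (Lemma~\ref{eps regularity}): if $\Phi_u(x,0^+)<\infty$ and $x$ were a tip point, then $\inf_c\bbint_{B_r}|u-c|^2\to 0$, which forces $r^{-1}\int_{B_r(x)}|Du|^2\to 0$; $\epsilon$-regularity then says $x$ is a jump point, a contradiction. The point is that $\epsilon$-regularity is exactly the tool that upgrades smallness of the normalized Dirichlet energy at a point of $K$ to a qualitative statement about that point, and a blow-up normalized by $\alpha_k$ cannot substitute for it.
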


Note that  $\Phi_u(\cdot)$ takes values in $\mathbb R^+\cup\{+\infty\}$, and
$\Phi_u(x,\,r)$ is continuous with repsect to $x$ for each fixed $r>0$. In addition,
$$\Phi_u(x,\,0^+)=\lim_{s\to 0} \sup_{0<r<s} \Phi_{u}(x,\,r)$$
  is the limit of a monotone non-increasing sequence as $r\to 0$. Thus  
$\Phi_u(\cdot)$ is upper semicontinuous. Thus we conclude the following corollary from Proposition~\ref{dichotomy}.
\begin{cor}\label{limit tip}
The limit of a sequence of tip points must be a tip point. Especially, when $0$ is a tip point of a local minimizer $u\in SBV(\Omega)$ and $B_1\subset \Omega$, the compactness of Mumford--Shah problem yields that, there exists $r_2>0$ such that 
$$\dist(0,\,V)\ge r_2$$
where $V$ is a connected component of $B_1\setminus  \overline S_{u}$ for which $0\notin \partial V$. See Figure~\ref{fig:gamma}. 
\end{cor}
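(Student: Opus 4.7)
The first assertion is immediate from the upper semicontinuity of $\Phi_u(\cdot,\,0^+)$ established just before the corollary. Indeed, if $\{x_n\}\subset\overline{S}_u$ are tip points converging to $x_\infty$, then $x_\infty\in\overline{S}_u$ by closedness, Proposition~\ref{dichotomy} gives $\Phi_u(x_n,\,0^+)=+\infty$ for each $n$, and upper semicontinuity forces
$$\Phi_u(x_\infty,\,0^+)\ge \limsup_{n\to\infty}\Phi_u(x_n,\,0^+)=+\infty,$$
so $x_\infty$ is a tip point by Proposition~\ref{dichotomy}.

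For the second assertion I argue by contradiction and blow-up. Suppose there exist connected components $V_n$ of $B_1\setminus\overline{S}_u$ with $0\notin\partial V_n$ and $\delta_n:=\dist(0,V_n)\to 0$. Pick $x_n\in V_n$ with $|x_n|\le 2\delta_n$; since $0\notin\overline{V_n}$, the segment from $0$ to $x_n$ meets $\partial V_n$ at some first point $y_n\in\overline{S}_u$, and $|y_n|\le 2\delta_n\to 0$. Set $r_n:=|y_n|$ and consider the rescaled minimizers
$$\tilde u_n(\xi):=r_n^{-1/2}\Bigl(u(r_n\xi)-\bbint_{B_{r_n}}u\Bigr),\qquad \xi\in B_{1/r_n},$$
which are Mumford--Shah minimizers by the scaling invariance of the problem, with uniform energy bounds on every compact set provided by Lemma~\ref{A reg}. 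The Mumford--Shah compactness theorem then yields, along a subsequence, $\tilde u_n\to \tilde u_\infty$ in $L^2_\loc$, with $\overline{S}_{\tilde u_n}\to \overline{S}_{\tilde u_\infty}$ locally in Hausdorff distance and $\tilde u_\infty$ a global minimizer on $\mathbb R^2$.

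The next step is to verify that the origin remains a tip of $\tilde u_\infty$, by combining the scaling invariance of $\Phi_u$ with a converging-minimizers refinement of the first assertion, and that the rescaled points $\tilde y_n=y_n/r_n\in S^1\cap\overline{S}_{\tilde u_n}$ subconverge to a point $\tilde y_\infty\in S^1\cap\overline{S}_{\tilde u_\infty}$. The crucial step --- and the main obstacle --- is to transport the detachment condition $0\notin\partial V_n$ through the Hausdorff convergence of the jump sets, since components of the complement can split or merge in the limit; the goal is to exhibit a connected component $\tilde V_\infty$ of $\mathbb R^2\setminus\overline{S}_{\tilde u_\infty}$ of positive distance from the origin on whose boundary $\tilde y_\infty$ lies. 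The existence of such a $\tilde V_\infty$ contradicts the structural fact that at a tip point of a global Mumford--Shah minimizer every complementary component approaching the tip must carry the tip on its boundary --- a consequence of Bonnet's monotonicity and the blow-up classification of tangent minimizers at tips. This contradiction delivers the required uniform lower bound $r_2>0$.
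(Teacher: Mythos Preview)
Your proof of the first assertion is correct and is exactly the paper's argument (upper semicontinuity of $\Phi_u$ together with Proposition~\ref{dichotomy}).

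For the second assertion the paper offers no proof beyond the phrase ``the compactness of the Mumford--Shah problem yields that\ldots''. Your blow-up outline is in that spirit, but it is genuinely incomplete in two places. First, the claim that $0$ remains a tip of the blow-up limit $\tilde u_\infty$ does \emph{not} follow from the upper semicontinuity established for a fixed minimizer; you would need it along the varying sequence $\tilde u_n$, which in turn requires showing $\inf_c\fint_{B_r}|\tilde u_\infty-c|^2\to 0$ as $r\to 0$, i.e.\ an $o(s)$ decay of $\inf_c\fint_{B_s}|u-c|^2$ that is not obviously available a priori. Second, your own ``main obstacle'' is real and you do not resolve it: the appeal to ``Bonnet's monotonicity and the blow-up classification of tangent minimizers at tips'' is circular, since Bonnet's monotonicity requires $\overline S_{\tilde u_\infty}$ to be connected, and the structural fact you invoke for global minimizers is essentially the global version of what you are trying to prove.

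There is, however, a short direct argument that avoids blow-up and uses only the local John property (Theorem~\ref{local john}). If no such $r_2$ exists, then since each component $V$ with $0\notin\partial V$ satisfies $\dist(0,V)>0$, there must be infinitely many \emph{distinct} components $V_n$ with $\dist(0,V_n)\to 0$. Pick $p_n\in V_n$ with $|p_n|\le 2\dist(0,V_n)$; for $n$ large one has $\dist(p_n,\partial\Omega)\ge\tfrac12$, so Theorem~\ref{local john} with $r=\tfrac14$ produces a John arc from $p_n$ escaping $B_{1/4}(p_n)$. Let $q_n$ be its first exit point from $B_{1/4}(p_n)$. The subarc from $p_n$ to $q_n$ lies in $\overline{B_{1/4}(p_n)}\subset B_1$ and in $\Omega\setminus K$, hence in $V_n$; the John condition gives $\dist(q_n,K)\ge (4J)^{-1}$, and since $|q_n|+(4J)^{-1}<1$ one obtains $B_{(4J)^{-1}}(q_n)\subset V_n$. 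Thus the pairwise disjoint components $V_n$ each contain a ball of fixed radius $(4J)^{-1}$ inside $B_1$, which is impossible. This yields $r_2>0$ directly, without any compactness or classification of global minimizers.
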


	\begin{figure}[ht]
		\centering
		\def\svgwidth{\columnwidth}
		\resizebox{0.7\textwidth}{!}{
\begingroup%
  \makeatletter%
  \providecommand\color[2][]{%
    \errmessage{(Inkscape) Color is used for the text in Inkscape, but the package 'color.sty' is not loaded}%
    \renewcommand\color[2][]{}%
  }%
  \providecommand\transparent[1]{%
    \errmessage{(Inkscape) Transparency is used (non-zero) for the text in Inkscape, but the package 'transparent.sty' is not loaded}%
    \renewcommand\transparent[1]{}%
  }%
  \providecommand\rotatebox[2]{#2}%
  \newcommand*\fsize{\dimexpr\f@size pt\relax}%
  \newcommand*\lineheight[1]{\fontsize{\fsize}{#1\fsize}\selectfont}%
  \ifx\svgwidth\undefined%
    \setlength{\unitlength}{209.76377106bp}%
    \ifx\svgscale\undefined%
      \relax%
    \else%
      \setlength{\unitlength}{\unitlength * \real{\svgscale}}%
    \fi%
  \else%
    \setlength{\unitlength}{\svgwidth}%
  \fi%
  \global\let\svgwidth\undefined%
  \global\let\svgscale\undefined%
  \makeatother%
  \begin{picture}(1,0.40540539)%
    \lineheight{1}%
    \setlength\tabcolsep{0pt}%
    \put(0,0){\includegraphics[width=\unitlength,page=1]{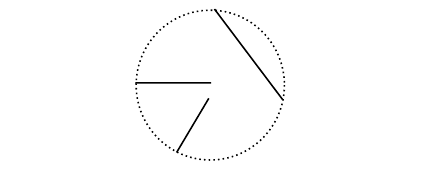}}%
    \put(0.47689826,0.2383612){\color[rgb]{0,0,0}\makebox(0,0)[lt]{\lineheight{1.25}\smash{\begin{tabular}[t]{l}$0$\end{tabular}}}}%
    \put(0.56691171,0.31019885){\color[rgb]{0,0,0}\makebox(0,0)[lt]{\lineheight{1.25}\smash{\begin{tabular}[t]{l}$V$\end{tabular}}}}%
  \end{picture}%
\endgroup%
}
		\caption{The black lines represents the discontinuity set $K$ of $u$. Corollary~\ref{limit tip} yields that the component $V$ is uniformly away from $0$.  }
		\label{fig:gamma}
	\end{figure}

We remark that, according to \cite[Theorem 6.1.1]{DF2023} and Corollary~\ref{limit tip}, the only missing part towards Mumford--Shah conjecture (see e.g. \cite[Conjecture 1.2.1]{DF2023}) is that, the set of tip points of $u$ is discrete in $\Omega$. 

In the last section, we prove a Morrey-type estimate in John domains, Lemma~\ref{growth of u}, and specifically, apply a version of \cite[Theorem 1.7]{SZ2024} to cover a neighborhood of the origin with uniformly finitely many John subdomains. These results finally give us the desired estimate. 

\

{\bf Acknowledgement}: The author would like to sincerely thank Prof. A. Figalli for the insightful discussions on this topic, and also expresses his gratitude to Prof. C. De Lellis for pointing out that the current result is a local version of the original  estimate for global minimizer proved by David and L\'eger \cite{DL2002}. 

\section{Preliminaries and Proof of Proposition~\ref{dichotomy}}\label{Preliminaries}
 Let us fix some notation. We denote the $k$-dimensional Hausdorff measure by $\mathcal H^{k}$.  For a given open set $\Omega\subset\mathbb R^n$, we denote by  $BV(\Omega)$ the space of functions of bounded variation in $\Omega$,  
whose weak (or distributional) gradient $Du$ is a vector-valued Radon measure. We write $D^a u$ for the absolutely continuous part of $Du$ and $D^s u$ for its singular part. 
The set of approximately continuous points of $u$ is denoted by $\mathcal C_{u}$, each of which is a Lebesgue point of $u$. We write $S_u=\Omega\setminus \mathcal C_u$ the Borel set of approximate discontinuity of $u$. 
Furthermore, for $\mathcal H^{n-1}$-almost every $x\in S_u$, there exists a direction $\nu_u\in \mathbb S^{n-1}$ and two  numbers $u_\pm\in \mathbb R$ so that $u_-(x) <u_+(x)$ and
$$\lim_{r\to 0} \int_{B_r^+(x,\,\nu_u(x))} |u(y)-u_+(x)|\, dy=0,$$
together with
$$\lim_{r\to 0} \int_{B_r^-(x,\,\nu_u(x))} |u(y)-u_-(x)|\, dy=0,$$
where 
$$B_r^+(x,\,\nu_u(x))=\left\{ y\in B_r(x)\colon (y-x)\cdot \nu_u(x) >0\right\},$$
and $B_r^-$ is defined similarly. These points are called \emph{jump points} of $u$. When 
$$D^s u =(u_+- u_-)\nu_u \mathcal H^{n-1}| _{S_u},$$
then $u\in SBV(\Omega), $ the space of \emph{special functions of bounded variation}. 
When $u\in SBV(\Omega)$ is a local minimizer, we usually denote by $K$ the closure of its jump set $\overline {S}_u$  unless explicitly stated otherwise. We say $x$ is a \emph{tip point} of $u$ if $x\in \overline{S}_u\setminus S_u$, which is a Lebesgue point of $u$, particularly.

For ease of reference, we assume $\lambda=1$ in the Mumford--Shah energy throughout the paper, unless explicitly stated otherwise. Moreover, for the ease of readablity, we suppress the subindex and write $MS:=MS_1$. 
Suppose $u\in SBV(B_r)$ is a local $1$-minimizer with $0\in \overline{S}_u$, then it follows from \cite[Remark 7.13]{AFP2000} that
$$
u_r(x)=r^{-\frac {n-1} n}u(rx)\in SBV(B_1)
$$
is also a local $1$-minimizer in $B_{1}$. Moreover, 
$$MS(u,\,B_r)=r^{n-1}MS( u_r,\,B_1).$$

 For a (rectifiable) curve $\gamma$, we denote by $\ell(\gamma)$ the Euclidean length of $\gamma$. When $\gamma$ is an arc (i.e. an injective curve), for any pair of points $x,\,y\in \gamma$, denote by $\gamma[x,\,y]$ a subarc joining them.  For a measurable set $A\subset \mathbb R^2$, we write
$$\bint_{A} u\, dx: =\frac 1 {|A|}\int_{A} u\, dx. $$

Let us begin with the following result, called the Ahlfors regularity of the local minimizer, which holds for every point in $K$. It is clear that the density 
$$r^{-1}  \mathcal{H} ^{1}\left( K \cap B_r \right)$$
is invariant under scaling. From this point forward, we focus exclusively on the planar case, specifically when $n=2$. 

\begin{lem}[{\cite[Theorem 2.6]{F2003}, \cite[Corollary 3.3]{BL2014}}]\label{A reg} 
 Suppose that $u\in SBV(\Omega)$ is a local minimizer and $0\in K$. 
There exists a constant $r_1>0$ so that for each $0<r<r_1$ and $B_r\subset \Omega$, we have
$$\frac 1 {C}\le r^{-1}  \mathcal{H} ^{1}\left( K \cap B_r \right) \le C, $$
and
$$ \int_{B_r} |Du|^2 \,dx \le Cr. $$
\end{lem}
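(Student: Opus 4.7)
My plan is to derive both upper bounds from a single competitor comparison and then invoke the classical density lower bound of De Giorgi--Carriero--Leaci for the remaining inequality.

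\textbf{Upper bounds via a competitor.} Fix $r>0$ with $\overline{B}_r\subset \Omega$ and define $v\in SBV_\loc(\Omega)$ by $v\equiv 0$ on $B_r$ and $v=u$ on $\Omega\setminus \overline{B}_r$. For almost every $r$ (chosen so that $\mathcal H^1(S_u\cap \partial B_r)=0$), one has $|Dv|\equiv 0$ on $B_r$ and $S_v\cap B_r=\emptyset$, while $S_v\cap \overline{B}_r$ is contained in $\partial B_r$, which has $\mathcal H^1$-length $2\pi r$. Choosing a slightly larger open set $U\Supset \overline{B}_r$ so that $\{u\neq v\}\Subset U$, local minimality on $U$ gives, after cancelling the contributions on $U\setminus \overline{B}_r$,
\begin{equation*}
\int_{B_r}|Du|^2\,dx + \mathcal H^1(K\cap B_r)\le \int_{B_r}|Dv|^2\,dx + \mathcal H^1(S_v\cap \overline{B}_r)\le 2\pi r.
\end{equation*}
This yields simultaneously $\int_{B_r}|Du|^2\,dx\le Cr$ and $\mathcal H^1(K\cap B_r)\le Cr$, with $C\le 2\pi$; the restriction to good radii is removed by monotonicity in $r$.

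\textbf{Lower bound via density estimate.} For the matching lower bound $\mathcal H^1(K\cap B_r)\ge r/C$, I would invoke the density lower bound theorem for $SBV$-minimizers due to De Giorgi--Carriero--Leaci \cite{DCL1989}, recorded e.g. as \cite[Theorem 7.21]{AFP2000}: there exist absolute constants $\vartheta_0,\,r_1>0$ such that for every $x\in K$ with $B_r(x)\subset \Omega$ and $0<r<r_1$ one has $\mathcal H^1(K\cap B_r(x))\ge \vartheta_0 r$. The classical proof is by contradiction and elimination: if $\mathcal H^1(K\cap B_r)$ were too small compared with $r$, one could replace $u$ on $B_{r/2}$ by a harmonic function matching its trace outside, whose Dirichlet energy is controlled by $\int_{B_r}|Du|^2\,dx$ via a Poincar\'e-type inequality exploiting the smallness of the jump set. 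The resulting competitor is strictly better, contradicting minimality.

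\textbf{Main obstacle.} The only genuinely delicate point is the density lower bound; the two upper bounds follow at once from the single constant competitor above. The elimination step requires the Poincar\'e--Wirtinger inequality in $SBV$ under small-jump hypotheses, which is the heart of the De Giorgi--Carriero--Leaci program. Since the statement is quoted verbatim from \cite{F2003, BL2014}, my proposal is to present the upper bounds via the one-line comparison and defer the lower density estimate to these references.
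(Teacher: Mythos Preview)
The paper does not prove this lemma; it is stated with citations to \cite{F2003} and \cite{BL2014} and used as a black box. Your sketch is the standard route and is essentially correct: the constant competitor $v\equiv 0$ on $B_r$ yields both upper bounds in one line, and the lower density bound is the De Giorgi--Carriero--Leaci elimination argument you describe.

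One small logical wrinkle worth cleaning up: the competitor comparison bounds $\mathcal H^1(S_u\cap B_r)$, not $\mathcal H^1(K\cap B_r)$ with $K=\overline{S_u}$. Passing from $S_u$ to $K$ requires the essential-closure property $\mathcal H^1(\overline{S_u}\setminus S_u)=0$, which is itself a \emph{consequence} of the lower density bound (a standard covering argument shows that any point of $\overline{S_u}$ with zero $\mathcal H^1$-density in $S_u$ would violate the lower bound). So the correct order is: (i) upper bounds for $S_u$ and for the Dirichlet energy via the competitor; (ii) lower density bound for $S_u$ via elimination; (iii) deduce $\mathcal H^1(K\setminus S_u)=0$; (iv) conclude the upper bound for $K$. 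Your write-up conflates $S_u$ and $K$ on the left-hand side of the comparison inequality, which is harmless once (iii) is in place but should be flagged. Otherwise the proposal is sound and matches what the cited references do.
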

 
We also record the following $\ez$-regularity of David, which provides a criterion to individuate tip points.

\begin{lem}[{\cite[Proposition 60.1]{D2006}}] \label{eps regularity}
 Suppose that $u$ is a minimizer and $0\in K$. There exists $\ez_0>0$ and  $\eta>0$ so that, whenever $x\in K$ satisfies
$$\int_{B_r(x)} |Du|^2\, dx\le \ez_0 r,$$
then $x$ is a jump point and $K\cap B_{\eta {r}(x)}$ is a $C^1$-curve or a $C^1$-spider (see \cite[Section 51 \& 53]{D2006} for the definitions). 
\end{lem}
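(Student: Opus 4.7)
The plan is to prove the $\epsilon$-regularity lemma by a blow--up and compactness argument, followed by a classification of planar global Mumford--Shah minimizers with vanishing Dirichlet energy, and concluded by a quantitative flatness--implies--regularity statement.

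First, by the scaling invariance recorded in Section~\ref{Preliminaries}, one reduces to the case $r=1$. Arguing by contradiction, suppose for a sequence $\epsilon_0^{(k)}\to 0$ there exist local minimizers $u_k\in SBV(B_1)$ with $0\in K_k:=\overline{S}_{u_k}$ and $\int_{B_1}|Du_k|^2\,dx\le \epsilon_0^{(k)}$ for which the conclusion fails. Lemma~\ref{A reg} supplies uniform control on $\mathcal H^{1}(K_k\cap B_1)$, and the standard compactness theorem for Mumford--Shah minimizers (cf.~\cite{AFP2000}) yields, along a subsequence, convergence of $u_k$ in $L^{1}_\loc$ to a local minimizer $u_\infty$, together with Hausdorff convergence $K_k\to K_\infty\ni 0$. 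Lower semicontinuity of the Dirichlet part forces $\int_{B_1}|Du_\infty|^2\,dx=0$, so $u_\infty$ is locally constant on each component of $B_1\setminus K_\infty$, and $K_\infty$ is a length--minimizing network through $0$.

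The second step is the classification of such $K_\infty$ in the plane: Bonnet's theorem~\cite{B1996} (see also~\cite{D2006}) asserts that, inside $B_{1/2}$, a length--minimizing network through the origin is either a diameter or a propeller (three radii meeting at $0$ with $120^\circ$ angles). Consequently, for all large $k$ the sets $K_k$ are Hausdorff--close to a line or a propeller through $0$. The third step is to promote this qualitative closeness to $C^{1,\alpha}$ regularity of $K_k\cap B_\eta$ for some uniform $\eta>0$: this is David's flatness theorem (\cite[Part V]{D2006}), proved via harmonic competitors, the Euler--Lagrange identity~\eqref{EL}, and a Campanato--type iteration of the flatness excess on dyadic scales. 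The resulting $C^1$-curve/spider structure around $0$ contradicts the standing failure assumption and in particular shows $0\in S_{u_k}$ is a jump point.

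The principal technical obstacle lies in the last step: turning Hausdorff closeness of $K$ to a line or a propeller into a $C^{1,\alpha}$-structure requires delicate harmonic replacement estimates controlling simultaneously the bulk Dirichlet energy and the $\mathcal H^{1}$-length of the singular set, together with a monotonicity--type excess decay (Bonnet's formula in the connected case, and its near--flat/near--propeller analogue due to David). Once the $C^{1}$-structure is in place, $0$ sits on a smooth arc, so by the existence of a tangent line and harmonic extension across it, the Poincar\'e quotient $\Phi_u(0)$ is finite, which is consistent with Proposition~\ref{dichotomy} and confirms that $0$ is a jump point rather than a tip.
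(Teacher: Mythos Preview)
The paper does not supply its own proof of this lemma; it is quoted as a black box from David's monograph \cite[Proposition~60.1]{D2006}, so there is no argument in the paper to compare against. Your three-step outline --- compactness of minimizers, classification of the zero-Dirichlet-energy limit, and flatness-implies-$C^1$ --- is indeed the skeleton of David's own proof in \cite{D2006}.

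Two points deserve correction. First, the classification step is \emph{not} Bonnet's theorem. Bonnet \cite{B1996} classifies global Mumford--Shah minimizers in the plane with connected singular set (line, half-line, propeller, cracktip); what you actually need here is the much more elementary local regularity of \emph{length-minimizing} $1$-sets: a local minimizer of $\mathcal H^1$ in the plane is, near each of its points, either a line segment or a triple junction at $120^\circ$ angles. This is classical and is treated directly in \cite[Part~D]{D2006}. Second, your contradiction setup is slightly imprecise because $\ez_0$ and $\eta$ are coupled. The clean route is: use compactness to show that sufficiently small normalized Dirichlet energy forces $K\cap B_{1/2}$ to be Hausdorff-close to a diameter or a propeller, and then invoke the two separate $\ez$-regularity theorems of David for the near-line and near-propeller configurations (\cite[Sections~51 and~53]{D2006}), each of which comes with its own scale; the final $\eta$ is the minimum of these. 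With these adjustments the sketch is correct.
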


Now we are ready to show Proposition~\ref{dichotomy}.

\begin{proof}[Proof of Proposition~\ref{dichotomy}]
Up to a translation, we may assume that $x$ is the origin. 
If \eqref{poincare} holds, then  for some $c_0>0$ and any sequence $r_k\to 0$ we have
$$
\limsup_{k\to \infty}   {r_k^{-1}\int_{B_{r_k}} |Du|^2\, dx}\le c_0 \limsup_{k\to \infty} \left(\inf_{c\in \mathbb R} \bint_{  B_{r_k}}|u-c|^2 \,dx\right). 
$$
Suppose that, on the contrary,  $0$ is a tip point, which is particularly a Lebesgue point of $u$. Then the right-hand side of the inequality above goes to $0$ as $k\to \infty$. 
This yields that 
$${r^{-1}\int_{B_{r}} |Du|^2\, dx}$$
is small whenever $r>0$ is sufficiently small. However, Lemma~\ref{eps regularity} implies that $0$ cannot be a tip point. This leads to a contradiction and thus $0$ must be a jump point. 

Now suppose that \eqref{poincare} fails, then by Lemma~\ref{A reg}, there exists a sequence $r_k\to 0$ so that for any  $M>0$ 
$$\lim_{k\to \infty} \left(\inf_{c\in \mathbb R} \bint_{  B_{r_k}}|u-c|^2 \,dx\right) \le  M^{-1}\lim_{k\to \infty} r_k^{-1} {\int_{B_{r_k}} |Du|^2\, dx}\le C M^{-1}.$$ 
Thus the origin is a Lebesgue point of $u$ as $M\to \infty$ (and $r_k\to 0$), which yields that $0$ is a tip point.  
\end{proof}

Let us also recall the definition of John domain. 
\begin{defn}
    For   $J\ge 1$, a (bounded) domain $\Omega\subset \mathbb R^n$ is said to be  $J$-John provided that, there exists a distinguished point $x_0\in \Omega$ so that, for every $x\in \Omega$, there exists an arc $\gamma\subset \Omega$ starting from $x$, ending at $x_0$ and satisfying the following condition: 
\begin{equation}\label{John curve}
\ell(\gamma[x,\,y])\le J\dist(y,\,\partial \Omega) \quad \text{ for any $y\in \gamma$}, 
\end{equation}
where $\ell(\gamma[x,\,y])$ denotes the length of the subcurve of $\gamma$ joining $x$ to $y$. 
We usually call $x_0$ the  {John center} of $\Omega$ and $\gamma$ the  {John curve} joining $x_0$ and $x$. 

Furthermroe, let $\gz\subset  \Omega$ be a curve joining $x$ to  $x_0$, and define the $J$-$carrot$ with the vertex $x$ and the core $\gamma$ joining $x$ to $x_0$ as
		$$car(\gz, J):=\bigcup\big\{B(y,\ell(\gamma[x,y])/J): y \in \gamma \setminus\{x\}\big\}.$$
Then  a (bounded) domain  $\Omega\subset {\mathbb R}^n$ is $J$-John  with the center $x_0 \in \Omega\cup \{\infty\}$, if for each  point  $x\in \Omega$, there exists a curve $\beta\subset \Omega$ joining $x$ to  $x_0$ so that $car(\beta, J)\subset \Omega$.
\end{defn}

We record the following result, which says that $\Omega\setminus K$ is locally John. 

\begin{thm}{\cite[Proposition 68.16]{D2006}}\label{local john}
Let $u\in SBV(\Omega)$ be a local minimizer  and $x\in \Omega\setminus K$.
Then there exists an absolute constant $J\ge 1$ so that, for any 
$$0<r\le \frac 1 2 \dist(x,\,\partial \Omega)=:r_3,$$ 
one can find an arc  $\gamma\subset \Omega\setminus K$ starting from $x$, escaping $B_r(x)$ (i.e. for a parametrization $\gamma\colon [0,\,1]\to \Omega$ with $\gamma(0)=0$, there exists $t\in (0,\,1)$ that $\gamma((t,\,1])\cap B_r(x)=\emptyset$), and satisfying that, for any $y\in \gamma,$
$$\ell(\gamma[x,\,y])\le  J \dist(y,\,K). $$
In particular, there exists a ball with radius $J^{-1} r$ contained in   $B_r(x)\cap (\Omega\setminus K)$.
\end{thm}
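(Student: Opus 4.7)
The plan is to combine the Ahlfors upper regularity from Lemma~\ref{A reg} with a Whitney-chain construction in the connected component of $x$, and to use the minimality of $u$ to rule out thin corridors.

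First, I would dispose of the trivial case: if $\dist(x,K)\ge r/2$, any radial segment exiting $B_r(x)$ works with $J=2$. Otherwise, pick $z\in K$ with $|x-z|=\dist(x,K)<r/2$; then $B_{2r}(z)\supset B_r(x)$, and translating Lemma~\ref{A reg} to the point $z$ yields $\mathcal{H}^1(K\cap B_r(x))\le Cr$. Let $U$ denote the connected component of $x$ in $B_r(x)\setminus K$. Since $K$ cannot saturate the annular region around $x$ in terms of $1$-dimensional measure, a planar topological argument forces $U$ to reach $\partial B_r(x)$.

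Next, I would construct the arc inductively via a Whitney-type decomposition $\{Q_j\}$ of $\Omega\setminus K$ with $\diam(Q_j)\approx \dist(Q_j,K)$. Starting from the cube $Q_0\ni x$ inside $U$, I would pick a chain of adjacent cubes $Q_0,Q_1,\dots,Q_N$ with the property that $|y-x|$ grows roughly geometrically as $y$ moves along the chain and the last cube meets $\partial B_r(x)$. Joining consecutive centers yields a polygonal arc $\gamma$, and the John bound $\ell(\gamma[x,y])\le J\dist(y,K)$ would follow from a geometric-series estimate over the cubes $Q_0,\dots,Q_k$ containing the subarc from $x$ to $y$, provided that at each stage there exists a neighboring cube of comparable size lying further from $x$ inside $U$.

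The main obstacle is precisely this geometric-growth step: ruling out that the chain gets trapped in a cusp or thin corridor of $U$ of width much smaller than $\dist(\cdot,K)$. Here I would exploit the minimality of $u$ via a competitor argument; erasing a portion of $K$ of small $\mathcal H^1$-mass which bounds such a thin corridor on one side and reconnecting $u$ harmonically across the gap would strictly lower the Mumford--Shah energy, contradicting minimality. This yields a uniform lower bound on the width of the corridor containing $x$ in terms of the Ahlfors constants, so the resulting $J$ depends only on the absolute constants from Lemma~\ref{A reg}. Finally, the ``in particular'' assertion is immediate: picking $y\in\gamma$ with $|y-x|=r/2$ forces $\ell(\gamma[x,y])\ge r/2$, hence $\dist(y,K)\ge r/(2J)$, giving $B_{r/(2J)}(y)\subset B_r(x)\cap(\Omega\setminus K)$ after possibly enlarging $J$ by a harmless absolute factor.
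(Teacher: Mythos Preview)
The paper does not supply its own proof of this statement: Theorem~\ref{local john} is quoted verbatim from \cite[Proposition 68.16]{D2006} and used as a black box, so there is no in-paper argument to compare against.

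As an outline, your architecture is reasonable---the trivial case, the appeal to Lemma~\ref{A reg}, the Whitney-chain skeleton for $\gamma$, and the final ``in particular'' paragraph are all sound. You also correctly locate the crux: ruling out thin corridors is where minimality must enter, since Ahlfors regularity alone does not force the John property (think of $K$ consisting of many concentric circles of geometrically decreasing radii; $\mathcal H^1(K\cap B_r)\lesssim r$ holds, yet no John arc can escape). The gap is in the competitor step. Erasing an arc of $K$ of length $\delta$ saves $\delta$ in $\mathcal H^1$, but the Dirichlet cost of ``reconnecting $u$ harmonically across the gap'' is of order $[u]^2$, where $[u]$ is the jump of $u$ across the removed arc, and nothing in your outline bounds $[u]$. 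Lemma~\ref{A reg} only gives $\int_{B_s}|Du|^2\le Cs$, which is an integrated bound on the gradient, not a pointwise bound on the jump; indeed, bounding the oscillation of $u$ near $K$ is essentially the content of the paper's main Theorem~\ref{main thm}, so you cannot assume it here. Without this control the energy comparison can go the wrong way and the contradiction evaporates. David's actual argument in \cite[Section 68]{D2006} is considerably more delicate than a one-line competitor; it proceeds through a careful stopping-time construction on circles, and the competitor used there is designed so that the harmonic extension is controlled via a good-radius selection rather than via an assumed jump bound. Your plan points at the right mechanism but leaves the decisive estimate unaddressed.
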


\section{Proof of Theorem~\ref{main thm}}

Let $0\in \Omega$ be a tip point of $u$, and assume that $u(0)=0$ up to an additive constant. 
Recall that tip points are Lebesgue points of $u$, i.e. \
$$\lim_{r\to 0}\bint_{B_r} |u|\, dx= 0.$$

Write $K=\overline{S}_u$. 
Recall the definition of $J$-carrot.  Then Theorem~\ref{local john} implies that, for every $x\in \Omega\setminus K$, one can find a $J$-carrot contained in $\Omega\setminus K$ with core $\gamma$ and vertex $x$, which escapes $B_r(x)$. 
		
The carrot condition above is equivalent to the following $M$-Boman chain condition, quantitatively: There exists $M\ge 1$ and a sequence of balls $\{U_i\}_{i=0}^\infty$ converging to $x$ so that, 
\begin{itemize}
\item $U_0=B_{\frac{\ell(\gamma)}{4J}}(x_0)$, 
\item $M^{-1}\diam(U_{i+1})\le \diam(U_i)\le M \diam(U_{i+1}),$
\item There exists $R_i\subset U_i\cap U_{i+1}$ so that 
$U_i\cup U_{i+1}\subset MR_i,$
\item $\sum_{i} \chi_{U_i}\le M.$
\end{itemize}
see e.g. \cite[Theorem 9.3]{HK2000}. 
We record the following results for John domains.

\begin{lem}\label{growth of u}
Let  $C_0\ge 4,\,r>0$ and $U\subset\mathbb R^2$ be a (bounded) $J'$-John  domain with center $x_0\in \partial B_{3r}\cap U$ such that $0\in U\subset B_{C_0r}$, $J'\ge 1$. Moreover, assume that there exists $C_1>0$ so that $u\in W^{1,\,2}(U)$ satisfies
\begin{equation}\label{Du upper bound}
    \int_{B_s(z)\cap U} |Du|^2\, dx \le C_1 s
\end{equation} 
for any $z\in U$ and $0<s<r$. Then 
\begin{equation*}
|u(x)-u(y)|\le C(C_0,\,C_1,\,J') r^{\frac 1 2} \quad \text{ for almost every pair of points } x,\,y\in U\subset B_{5r}.
\end{equation*}
\end{lem}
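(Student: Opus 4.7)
The plan is to carry out a standard John-domain chaining argument of Morrey--Campanato type: for the $J'$-John domain $U\subset\R^2$ and the Sobolev function $u$ whose gradient satisfies $\int_{B_s(z)\cap U}|Du|^2\,dx\le C_1 s$, the dimensional balance puts us exactly on the Campanato borderline corresponding to H\"older exponent $1/2$, and the assertion is the quantitative form of this embedding. As a preliminary I would extend the $L^2$-growth hypothesis from scales $s<r$ to all scales $s\le C(C_0,J')r$: any ball of radius $s$ centered at $z\in U\subset B_{C_0r}$ may be covered by $O_{C_0,J'}(1)$ balls of radius $r/2$ with centers in $U$, giving $\int_{B_s(z)\cap U}|Du|^2\,dx\le C_1' s$ with $C_1'=C(C_0,J')C_1$ throughout the needed range. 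Moreover, the John condition applied to a carrot from $0\in U$ to $x_0\in\partial B_{3r}\cap U$ forces $\dist(x_0,\partial U)\ge 3r/J'$, so the fixed reference ball $B^*:=B_{r/(50J')}(x_0)$ lies inside $U$ and will serve as a common endpoint for every chain below.

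For each Lebesgue point $x\in U$ of $u$ I would then assemble a Boman--Whitney chain $\{D_k\}_{k=0}^{\infty}\subset U$ of Euclidean balls (produced via the Boman reformulation of the John condition, see e.g.\ \cite[Theorem 9.3]{HK2000}) with the properties that $D_0=B^*$, the centers converge to $x$, $\mathrm{rad}(D_k)\to 0$, consecutive balls $D_k,D_{k+1}$ sit inside a common enlargement $D_k^\sharp\subset U$ of measure comparable to $|D_k|$ and $|D_{k+1}|$, and the radii $\rho_k$ shrink geometrically so that $\sum_k\rho_k^{1/2}\le C(C_0,J')r^{1/2}$. Applying the Euclidean Poincar\'e inequality on $D_k^\sharp$ together with the extended $L^2$-growth bound,
$$\bint_{D_k^\sharp}|u-u_{D_k^\sharp}|\,dx\le C\rho_k\left(\bint_{D_k^\sharp}|Du|^2\,dx\right)^{1/2}\le C(C_1')^{1/2}\rho_k^{1/2},$$
yields the telescope step $|u_{D_k}-u_{D_{k+1}}|\le C(C_0,C_1,J')\rho_k^{1/2}$. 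Summing the geometric series and using $u_{D_k}\to u(x)$ at the Lebesgue point gives $|u(x)-u_{B^*}|\le C(C_0,C_1,J')r^{1/2}$, and the triangle inequality applied to two Lebesgue points $x,y\in U$ closes the proof.

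The hard part I expect is the precise bookkeeping of the Boman-chain constants---overlap, comparable measures, and geometric scaling of the radii---uniformly in $J'$. A naive dyadic chain placed directly along a John curve $\gamma$ with balls of radius $\sim 2^{-k}\ell(\gamma)/J'$ collapses when $J'$ is large, since then the Euclidean balls inside $U$ at each intermediate level fail to cover the next by a fixed factor; one really needs the Boman chain (equivalently, a Whitney decomposition of $U$ followed along $\gamma$) to control the geometry. Once the chain is properly set up, however, the Poincar\'e estimate on each enlargement, the geometric summation, and the final triangle inequality are all routine.
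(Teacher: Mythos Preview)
Your proposal is correct and follows essentially the same route as the paper: Boman chains (via \cite[Theorem~9.3]{HK2000}) from each Lebesgue point to a fixed reference ball at the John center $x_0$, Poincar\'e on each chain ball, the Morrey bound to turn each step into $C\,\diam(U_k)^{1/2}$, and a geometric-series summation followed by the triangle inequality for two points. Your preliminary extension of the growth hypothesis to scales $s\lesssim C(C_0,J')r$ is a tidy addition---the paper applies \eqref{Du upper bound} at the top of the chain without making this explicit---but otherwise the arguments coincide.
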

\begin{proof}
Fix $x,\,y\in U$ which are Lebegue points of $u$. 
Since carrot and Boman chain conditions are equivalent, we can find two sequences of balls $\{U_i\}_{i=0}^{+\infty}$ and $\{U_{j}\}_{j=0}^{-\infty}$ of $M$-Boman chains from $x$ and $y$ to $x_0$, respectively, where
$$M=M(J'), \quad U_0=B_{c(C_0,\,J')r}(x_0),\quad U_i\to x  \ \text{ as } \ i=\to +\infty,\quad \,U_j\to y \ \text{ as } \ j\to -\infty.$$
Then by writing in telescoping sum and applying Poincar\'e inequality on $u$, we have
\begin{align*}
|u(x)-u(y)|\le & \sum_{k=-\infty}^{+\infty} \left|\bint_{U_k} u \,dx - \bint_{U_{k+1}} u \,dx \right| \\
\le & \sum_{k=-\infty}^{+\infty} \bint_{U_k} \left|u  - \bint_{U_{k+1}} u\right| \,dx \\
\le &  C(J')  \sum_{k=-\infty}^{+\infty} \diam(U_k)  \left( \bint_{U_{k}\cup U_{k+1}}|Du|^2\, dx\right)^{\frac 1 2}.
\end{align*}
As the assumption \eqref{Du upper bound} gives
$$  \bint_{U_{k}\cup U_{k+1}}|Du|^2\, dx \le C(C_1,\,J) \diam(U_k)^{-1}, $$
it follows that
\begin{align*}
\sum_{k=-\infty}^{+\infty} \diam(U_k)  \left( \bint_{U_{k}\cup U_{k+1}}|Du|^2\, dx\right)^{\frac 1 2}\le   & C(C_1,\,J') \sum_{k=-\infty}^{+\infty} \diam(U_k) ^{\frac 1 2} \le C(C_0,\,C_1,\,J')r^{\frac 1 2},
\end{align*}
where we applied the fact that both $\{\diam (U_i)\}_{i=0}^{+\infty}$ and $\{\diam (U_j)\}_{j=0}^{-\infty}$  are geometric series in the last inequality. Then our lemma follows from the chain of inequalities.  
\end{proof}

The following lemma is a local version of \cite[Theorem 1.7]{SZ2024} and we record its proof in Appendix~\ref{a1}. 

\begin{lem}\label{local john decompose}
Suppose that $0\in \overline {S}_u\subset \mathbb R^2$ is a tip point. 
Then for  $r_3>0$ defined in Theorem~\ref{local john}  and  any $0< r<r_3/9 $, 
there exist $C=C(J)>0$, $N=N(J)\in \mathbb N$ and $J'=J'(J)>0$ so that, we can cover $B_r\setminus \overline{S}_u$ by (the closure of) at most $N$-finitely many $J'$-John domains $W_{j,\,r}\subset B_{Cr}\setminus  \overline{S}_u$, $1\le j\le N$.

In particular, there exists a constant $C_2=C_2(J)>0$ such that, for every point $x\in W_{j,\,r}$, one can find a rectifiable curve $\beta_x\subset W_{j,\,r}$, which joins $x$ to a point $w_{j,\,r}\in \partial B_{3r}\cap W_{j,\,r}$,  as the core of a $J'$-carrot satisfying
$$\ell(\beta_x)\le C_2 r. $$
\end{lem}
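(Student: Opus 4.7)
Set $K=\overline{S}_u$ and fix $0<r<r_3/9$. The target is at most $N=N(J)$ many $J'(J)$-John subdomains $W_{j,r}\subset B_{Cr}\setminus K$ whose closures cover $B_r\setminus K$, each equipped with a center $w_{j,r}\in \partial B_{3r}\cap W_{j,r}$ reachable from every $x\in W_{j,r}$ by a carrot of length $\le C_2 r$. The strategy is (i) bound the number of components of $B_{Cr}\setminus K$ that meet $B_r$ by an area count using Theorem~\ref{local john}, and (ii) inside each such component, aggregate the pointwise carrots of Theorem~\ref{local john} into a single John subdomain by invoking (the local version of) \cite[Theorem 1.7]{SZ2024} recorded in Appendix~\ref{a1}.

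\textbf{Step 1 (counting components).} Fix an absolute constant $C\ge 4$ and let $V_1,\ldots,V_m$ enumerate the components of $B_{Cr}\setminus K$ meeting $B_r$. Pick $x_j\in V_j\cap B_r$. Since $r<r_3/9\le \tfrac12\dist(x_j,\partial\Omega)$, the last assertion of Theorem~\ref{local john} produces a ball of radius $r/J$ inside $B_r(x_j)\setminus K\subset B_{2r}\setminus K\subset B_{Cr}\setminus K$ that the John arc from $x_j$ places in the same $(\Omega\setminus K)$-component as $x_j$, hence inside $V_j$. Disjointness of the $V_j$ then gives
\[
m\cdot \pi(r/J)^{2}\;\le\;|B_{Cr}|\;=\;\pi C^{2}r^{2},
\]
so $m\le (CJ)^{2}=:N(J)$.

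\textbf{Step 2 (one John subdomain per component).} In each $V_j$, apply Theorem~\ref{local john} at $x_j$ with scale $4r\le r_3/2$: the resulting $J$-carrot $\gamma_0\subset V_j$ escapes $B_{4r}(x_j)\supset \overline{B_{3r}}$, so its first crossing of $\partial B_{3r}$ defines a candidate center $w_{j,r}\in V_j\cap \partial B_{3r}$. For any other $x\in V_j\cap B_r$, the same theorem yields a $J$-carrot $\gamma_x\subset V_j$ escaping $B_{4r}(x)$; truncating at the first exit $y_x$ of $\overline{B_{3r}}$ leaves an arc in $\overline{B_{3r}}\cap V_j$, and the John bound $\ell(\gamma_x[x,y_x])\le J\dist(y_x,K)\le J|y_x|=3Jr$ (using $0\in K$) controls its length. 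It then remains to join each $y_x$ to the fixed $w_{j,r}$ by an auxiliary arc $\sigma_x\subset V_j\cap (B_{Cr}\setminus \overline{B_{2r}})$ of length $\lesssim r$ whose carrot tube has uniformly bounded constant; the concatenation $\beta_x:=\gamma_x[x,y_x]\cup \sigma_x$ plays the role of the core, and $W_{j,r}$ is the union of all the resulting carrot tubes.

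\textbf{Main obstacle.} The delicate step is producing the auxiliary arcs $\sigma_x$ in the annulus $V_j\cap(B_{Cr}\setminus\overline{B_{2r}})$ with uniformly controlled length and John constant, and verifying that the aggregate of individual carrot tubes is genuinely $J'$-John. This is precisely the content of \cite[Theorem 1.7]{SZ2024} (Appendix~\ref{a1}): starting from the local carrot property of Theorem~\ref{local john} applied at $O(1)$ well-spaced points of $V_j\cap \partial B_{3r}$, it chains the resulting carrots across the annulus with constants depending only on $J$, linking every $y_x$ to $w_{j,r}$ and assembling the tubes into a $J'$-John subdomain. Combined with the component count $m\le N(J)$ from Step 1, this yields the announced decomposition with all constants depending only on $J$.
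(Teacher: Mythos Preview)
Your Step~1 is correct but orthogonal to the paper's argument, and your Step~2 contains a genuine gap.

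The paper does \emph{not} decompose $B_r\setminus K$ according to the topological components of $B_{Cr}\setminus K$. Instead, for every $x\in B_r\setminus K$ it records an exit point $x_r\in\gamma_x\cap\partial B_{3r}$ of the escaping John curve from Theorem~\ref{local john}, applies Besicovitch to the family of balls $\overline{B}\bigl(x_r,\tfrac12\dist(x_r,K)\bigr)$ (each of radius in $[r/J,\,3r/2]$, all contained in $B_{5r}\setminus \overline{B}_r$), and then groups the points $x$ by which connected component $U_{j,r}$ of the resulting finite ball-union contains $x_r$. The $J'$-John curve $\beta_x$ is $\gamma_x[x,x_r]$ followed by a polyline threading the at most $\hat N(J)$ balls of $U_{j,r}$ to the fixed center $y_r$; the chain-of-balls structure is exactly what manufactures the annular connection $\sigma_x$ with controlled length and carrot constant. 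No external theorem is invoked, and the component count from your Step~1 plays no role.

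Your resolution of the ``main obstacle'' is circular: Appendix~\ref{a1} \emph{is} the proof of the present lemma, and its construction does not operate inside a prescribed topological component $V_j$. In fact a single component $V_j$ of $B_{Cr}\setminus K$ may give rise to several distinct $V_{j,r}$'s in the paper's construction, since the exit points $x_r$ for $x\in V_j\cap B_r$ can land in disjoint ball-clusters on $\partial B_{3r}$. Your plan commits to the strictly stronger assertion that all such exit points can be linked to a \emph{single} center $w_{j,r}$ inside $V_j$ with uniformly controlled carrot constant; nothing in Theorem~\ref{local john} guarantees this, because the connectedness of $V_j$ may be realized only through thin passages of the annulus that are invisible to the pointwise carrot condition. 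You would have to supply that linking argument independently; as written, Step~2 relocates the essential difficulty rather than resolving it.
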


This lemma together with Lemma~\ref{growth of u} and Lemma~\ref{local john decompose} implies the following  theorem. 

\begin{proof}[Proof of Theorem~\ref{main thm}]
Observe that Theorem~\ref{local john} together with Lemma~\ref{local john decompose} implies that, for any $0<r<r_3/9$, we can cover $B_r\setminus \overline{S}_u$ by at most $N$-finitely many (nonempty) $J'$-John domains $W_{j,\,r}$. 

Now by recalling that $u(0)=0$ and the $L^2$-estimate on the gradient from Lemma~\ref{A reg}, we employ  Lemma~\ref{growth of u} to $u$ on each of the John domains $W_{j,\,r}$ to conclude that,  for $0<r<r_0:= \min\{r_1,\,r_2,\,r_3/9\}$, 
$$|u(x)-u(0)|=|u(x)|\le  C r^{\frac 1 2} \quad \text{ for any } \  x\in W_{j,\,r}.$$
Moreover, Corollary~\ref{limit tip} yields that, for 
any $0<r<r_0\le r_2$, one has
$0\in \partial W_{j,\,r}$ for every  $W_{j,\,r}$. This yields our desired estimate as the number of John domains is at most $N$. 
\end{proof}

\appendix

\section{Proof of Lemma~\ref{local john decompose}}\label{a1}

To start with, we record the following proposition. While this technique has been commonly employed in previous manuscripts, such as \cite{NV1991}, it has not been explicitly formulated, to the best of our knowledge, in the context of our present work.

	\begin{prop}\label{length car pro}
		Let $J\ge 1$. Assume that $\gamma \subset\mathbb{R}^2$ is a locally rectifiable curve joining $x$ to $y$, where $x,\,y\in \mathbb{R}^2$. Then
		$ car(\gamma,J)$ is a $J$-carrot John domain. 
		
		To be more specific,  for any $z\in car(\gamma,J)$, we can find a rectifiable curve $\gamma_z$ joining $z$ to $y$, such that for some $\eta\in \gamma$, we have
		$$\gamma[\eta,y]=\gamma_z[\eta,y]$$
		and for each $a\in \gamma[\eta,y]$,  
		\begin{equation}\label{length}
			\ell(\gamma_z[z,a])\le  \ell(\gamma[x,a]),\quad car(\gamma_z,J)\subset car(\gamma,J).
		\end{equation}
	\end{prop}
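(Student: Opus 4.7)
The plan is to construct $\gamma_z$ explicitly as a concatenation and then verify the two claims by elementary geometric estimates. Since $z \in car(\gamma, J)$, by definition there exists $\eta \in \gamma \setminus \{x\}$ with $|z - \eta| < \ell(\gamma[x, \eta])/J$, and I would take $\gamma_z$ to be the Euclidean segment from $z$ to $\eta$ followed by the subarc $\gamma[\eta, y]$. The identification $\gamma_z[\eta, y] = \gamma[\eta, y]$ is then immediate, and rectifiability of $\gamma_z$ follows from the local rectifiability of $\gamma$ together with the finite length of the segment.

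For the length bound in \eqref{length}, given $a \in \gamma[\eta, y]$ I would combine the triangle inequality with the defining bound on $\eta$:
\[
\ell(\gamma_z[z, a]) = |z - \eta| + \ell(\gamma[\eta, a]) < \frac{1}{J} \ell(\gamma[x, \eta]) + \ell(\gamma[\eta, a]) \le \ell(\gamma[x, \eta]) + \ell(\gamma[\eta, a]) = \ell(\gamma[x, a]),
\]
where the last inequality uses $J \ge 1$. For the inclusion $car(\gamma_z, J) \subset car(\gamma, J)$, I would split on the location of the base point $a' \in \gamma_z$. If $a'$ lies in $\gamma[\eta, y]$, then $a' \in \gamma$ and the length estimate just proved combined with the nesting of balls yields $B(a', \ell(\gamma_z[z, a'])/J) \subset B(a', \ell(\gamma[x, a'])/J) \subset car(\gamma, J)$. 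If instead $a'$ lies on the segment $[z, \eta]$, then $\ell(\gamma_z[z, a']) = |z - a'|$, and for any $w$ with $|w - a'| < |z - a'|/J$ the triangle inequality and the identity $|a' - \eta| = |z - \eta| - |z - a'|$ give
\[
|w - \eta| \le |w - a'| + |a' - \eta| < \frac{|z - a'|}{J} + |z - \eta| - |z - a'| \le |z - \eta| < \frac{\ell(\gamma[x, \eta])}{J},
\]
so that $w$ already belongs to the single ball $B(\eta, \ell(\gamma[x, \eta])/J) \subset car(\gamma, J)$ that witnesses $z$ as a carrot point.

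The hard part is precisely this last verification on the new segment: one has to show that the slack $1 - 1/J$ produced by shrinking the radius around $a'$ is enough to compensate for recentering the ball from $\eta$ to $a'$, and this slack is supplied exactly by the strict defining inequality $|z - \eta| < \ell(\gamma[x, \eta])/J$. Once both cases of the inclusion are settled, applying the same construction to every $z \in car(\gamma, J)$ shows that $car(\gamma, J)$ is itself a $J$-carrot John domain with distinguished vertex $y$, which is the remaining assertion of the proposition.
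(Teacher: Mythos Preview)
Your proposal is correct and follows essentially the same route as the paper: both take $\gamma_z$ to be the segment $[z,\eta]$ followed by $\gamma[\eta,y]$, where $\eta$ is a witness for $z\in car(\gamma,J)$, and both derive the length inequality on $\gamma[\eta,y]$ by the same chain $|z-\eta|<\ell(\gamma[x,\eta])/J\le \ell(\gamma[x,\eta])$. The only difference is cosmetic: the paper records the inequality $\ell(\gamma_z[z,a])\le d(a,\partial B(\eta,\ell(\gamma[x,\eta])/J))$ for $a$ on the segment and then says the inclusion $car(\gamma_z,J)\subset car(\gamma,J)$ ``follows directly from our construction,'' whereas you spell out that same segment case explicitly via the triangle-inequality computation showing $B(a',\ell(\gamma_z[z,a'])/J)\subset B(\eta,\ell(\gamma[x,\eta])/J)$.
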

	\begin{proof}
		For any $z\in car (\gamma,J)$, the definition of $car (\gamma,J)$ yields a ball 
		$$B(\eta,\ell(\gamma[x,\eta])/J)\subset car (\gamma,J)$$
		for some points $\eta\in \gamma\setminus \{x\}$ so that $z\in B(\eta,\ell(\gamma[x,\eta])/J)$.
		
		Let $L_{z,\eta}$ be the line segment joining $z$ to $\eta$ and then $\gamma_z:=L_{z,\eta}\cup \gamma[\eta,y]$ is locally  rectifiable curve joining $z$ to $y$. When $a\in L_{z,\eta}$,  
		\begin{equation}\label{a41}
			\ell(\gamma_z[z,a])\le d\left(a,\partial B(\eta,\ell(\gamma[x,\eta])/J)\right)\le \ell(\gamma[x,\eta])/J.
		\end{equation}
		When $a\in \gamma[\eta,y]$, by applying \eqref{a41} with $a=\eta$ there, 		we have 
		\begin{align*} 
			\ell(\gamma_z[z,a])&\le \ell(\gamma_z[z,\eta])+\ell(\gamma_z[\eta,a])\le \frac{\ell(\gamma[x,\eta])}{J}+\ell(\gamma[\eta,a])\nonumber\\
			& \le \ell(\gamma[x,\eta])+\ell(\gamma[\eta,a])=\ell(\gamma[x,a]).
		\end{align*}
		To conclude, we obtain that
		$$\ell(\gamma_z[z,a])\le  \ell(\gamma[x,a]),$$
		which is the first formula of \eqref{length}. 
		The second one   follows directly from out construction of $car(\gamma_z,J)$ and $car(\gamma,J)$, and we conclude the lemma. 
	\end{proof}
	
	\subsection{A decomposition $V_{j,\,r}$ of $B_r\setminus K$}
	Recall that $K=\overline S_{u}$. 
	Now  for any $x\in B_r \setminus K$ with $B_r\subset \Omega, 0<r<r_0/9$, 
	we choose an escaping  (John) curve $\gamma_x\subset \Omega\setminus K$ from $x$  with $car(\gamma,J)\subset B_r\setminus K$.
	Although there could be many choices of curves for $x\in\Omega\setminus K$, we just  choose one of them. Let $\Gamma=\{\gamma_x\}_{x\in B_r\setminus K }$ be the collection of these chosen curves. In what follows, for any points $x\in B_r \setminus K$, $\gamma_x$ always refers to this particular choice of escaping curve.
	
	Note that for  $0<r<r_0/9$, we have  $B_{r}\cap K \neq \emptyset$ as $0\in K$. Our first step is to decompose $B_r\setminus K$ into finitely many subsets $V_{j,\,r}$ so that, there exists a collection $\mathcal{B}_{j\,,r}$ of at most $C(J)$-many balls, whose center is on  $\partial B_{3r}$ and whose radius is at least $J^{-1}r$, satisfying that, 
	for  any $x\in V_{j,\,r}$, we can find a ball $B\in\mathcal{B}_{j,\,r}$ with
	$$\gamma_x\cap B\neq\emptyset.$$

	To this end, observe that, according to Theorem~\ref{local john},  for each $x\in B_r\setminus K$ and $\gamma_x\in \Gamma$, there exists a point
	\begin{equation}\label{xR def}
		x_r\in\gamma_x\cap\partial B_{3r}  
	\end{equation}
	so that 
	\begin{equation}\label{lower size}
		2r\le\ell(\gamma[x,x_r])\le J\dist(x_r,K).
	\end{equation}
	
	Consider the collection of closed balls 
	\begin{equation}\label{ball collect}
		\{\overline{B}_x\}_{x\in B_r\setminus K}:=\left\{\overline{B}\left(x_r,\frac{\dist(x_r,K)}{2}\right)\right\}_{x\in B_r\setminus K}.
	\end{equation}
	Then thanks to \eqref{lower size} and $0\in K$, we  obtain that 
	\begin{equation}\label{radius larg}
		\frac{r}{J}\le \frac{\dist(x_r,K)}{2}\le \frac 3 2 r,
	\end{equation}
	and hence  $B_x\cap B_r=\emptyset$. 
	
	We next let 
	$$A_r:=\bigcup_{x\in \overline B_{r}\setminus K}\{x_r\}$$
	be the collection of the centers of $B_x$'s. By Bescovitch's covering theorem, there exists a subcollection  $\{\overline{B}_i\}_{i\in\mathbb{N}}$ of $\{\overline{B}_x\}_{x\in B_r\setminus K}$ consisting of at most countably many balls, such that 
	
	\begin{equation}\label{bescovitch}
	    \chi_{A_r}(z)\le \sum_{B_i} \chi_{\overline{B}_i}(z)\le C, \qquad  \forall z\in B_{5r}\setminus K. 
	\end{equation}

	
	Recall that by \eqref{radius larg}
	$$B_i\subset B_{5r}\setminus \overline{B}_{r}$$
	and
	$|B_{i}|\ge c(J)r^n. $
	Thus we have at most $C(J)$-many elements in $\{\overline{B_i}\}$ by \eqref{bescovitch}. 
	As a result, the union of balls 
	$$\bigcup_{i}\overline{B}_i$$
	has at most $N= {N}(J)$ components $U_{j,r}$
	for $j\in\{1,\cdots,N_r\}$ and 
	$N_r\le  N.$
	By defining $U_{j,\,r}$ to be empty for $j> N_r$, we may assume that there exists exactly $ N$ components $U_{j,r}$, and each $U_{j,\,r}$ contains at most $\hat N=\hat N(J)$ balls. 
	We write $\mathcal {B}_{j,r}$ as the collection of balls $B_i$ contained in each component $U_{j,r}$.
	
	Now it follows from our construction, for any $x\in B_r\setminus K$, there exists some $1\le j\le N_r$ so that, $x_r\in \gamma_x$ is covered by a ball in $\mathcal{B}_{j,r}$. Thus, by  defining 
	\begin{equation}\label{VR}
		V_{j,r}:=\{x\in B_r\setminus K:x_r\in D \text{ for some }D\in\mathcal{B}_{j,r}\},
	\end{equation}
	we obtain the desired decomposition of $B_r\setminus K$. The set $V_{j,\,r}$ is defined to be empty if $U_{j,\,r}$ is empty.

The following lemma is a version of {\cite[Proposition 3.2]{SZ2024}}.

	\begin{prop}\label{omega cons}
		For  $1\le j\le N$ with $N= N(J)$ defined above, the set $W_{j,\,r}$ is either empty (if $V_{j,\,r}$ is empty), or for any fixed $y\in V_{j,\,r}$ together with the escaping point $y_r\in \partial B_{3r}$, the set
		\begin{equation}\label{Omega def}
			W_{j,\,r}:=car(\gamma_y[y,y_r],J)\cup \bigcup_{x\in V_{j,r}}car(\beta_x,J').
		\end{equation}
		is a $J'$-John domain with John center $y_r$, where $J'=J'( J)>0$
 and  $\beta_x$ is a rectifiable curve joining $x$ to $y_r$ satisfying $\gamma_{x}[x,x_r]\subset \beta_x$; recall that $\gamma_x$ is the escaping curve starting from $x$. 
		 
		 Moreover, there exists a constant $C_3=C_3(J)\ge 4$ so that, the curve $\beta_x$  joining $x\in W_{j,r}$
		  to $y_r$ is  the core of a $J'$-carrot satisfying 
		$$\ell(\beta_x)\le C_3 r$$
		and
		\begin{equation}\label{close omega}
			V_{j,r}\subset W_{j,r},\quad car(\beta_x,J')\subset W_{j,r}\subset (\Omega\setminus K)\cap B_{2 C_3 r}.
		\end{equation}
	\end{prop}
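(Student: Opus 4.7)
The plan is as follows. For each $x\in V_{j,r}$, I will construct $\beta_x$ by concatenating the sub-arc $\gamma_x[x,x_r]$ of the escaping curve with a short piecewise-linear curve $\sigma_x$ running through the uniformly-bounded chain of balls in $\mathcal{B}_{j,r}$ from $x_r$ to $y_r$. The John property of $W_{j,r}$ with center $y_r$ will then follow point-by-point from Proposition~\ref{length car pro}, once I show that each $car(\beta_x,J')$ and $car(\gamma_y[y,y_r],J)$ lies inside $W_{j,r}\subset \Omega\setminus K$. When $V_{j,r}=\emptyset$ there is nothing to prove, so I may fix such a $j$ and an arbitrary $y\in V_{j,r}$.

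To build $\sigma_x$, I will use that $U_{j,r}$ is a connected union of at most $\hat N(J)$ balls from $\mathcal{B}_{j,r}$. I would select a chain $D_0,\ldots,D_m\in\mathcal{B}_{j,r}$ with $x_r\in D_0$, $y_r\in D_m$, $D_{i-1}\cap D_i\neq \emptyset$, and $m\le \hat N(J)$; picking $p_i\in D_{i-1}\cap D_i$, I define $\sigma_x$ as the broken-line path $x_r\to p_1\to p_2\to\cdots\to p_m\to y_r$. Each segment lies inside a single convex ball $D_i=\overline{B}(z_i,\dist(z_i,K)/2)\subset \Omega\setminus K$ of radius at most $3r/2$ by \eqref{radius larg}, hence $\ell(\sigma_x)\le 3(\hat N+1)r$. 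Combined with $\ell(\gamma_x[x,x_r])\le J\dist(x_r,K)\le 3Jr$ from \eqref{lower size} and \eqref{radius larg}, I will get $\ell(\beta_x)\le C_3 r$ with $C_3=C_3(J)$, and $\beta_x\subset \Omega\setminus K$ with $\gamma_x[x,x_r]\subset \beta_x$ by construction.

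For the John property I will pick $J':=C_3 J$. For $z\in \sigma_x$ the inclusion $\dist(z,K)\ge r/J$, coming from the definition \eqref{ball collect} of the $\overline B_x$'s together with \eqref{radius larg}, combined with $\ell(\beta_x[x,z])\le C_3 r$, guarantees $B(z,\ell(\beta_x[x,z])/J')\subset \Omega\setminus K$; for $z\in \gamma_x[x,x_r]$ the analogous inclusion is automatic from the carrot property of the chosen escaping curve $\gamma_x$ and $J'\ge J$. Thus $car(\beta_x,J')\subset \Omega\setminus K$, and likewise $car(\gamma_y[y,y_r],J)\subset \Omega\setminus K$, giving $W_{j,r}\subset \Omega\setminus K$ and $V_{j,r}\subset W_{j,r}$. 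For any $z\in car(\beta_x,J')$ (resp.\ $z\in car(\gamma_y[y,y_r],J)$), Proposition~\ref{length car pro} applied to $\beta_x$ (resp.\ $\gamma_y[y,y_r]$) will produce a curve $\gamma_z$ from $z$ to $y_r$ whose $J'$-carrot (resp.\ $J$-carrot) sits inside $W_{j,r}$, proving that $W_{j,r}$ is $J'$-John with center $y_r$. The inclusion $W_{j,r}\subset B_{2C_3 r}$ is immediate from $V_{j,r}\subset B_r$, $\ell(\beta_x)\le C_3 r$, and the carrot thickness $\ell/J'\le r$.

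The main obstacle is to control $\dist(z,K)$ for $z$ on the transition curve $\sigma_x$ and to show it is comparable to $r$, so that $J'$ depends only on $J$. This is where the construction in Section~\ref{a1} pays off: the balls in $\mathcal{B}_{j,r}$ are exactly of the form $\overline B(z_i,\dist(z_i,K)/2)$, which forces $\dist(D_i,K)\ge r/J$ uniformly, and Besicovitch bounds the cardinality $|\mathcal{B}_{j,r}|$ by $\hat N(J)$. The combination of these two facts is precisely what lets $J'$ be chosen independently of $r$.
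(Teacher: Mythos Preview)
Your proposal is correct and follows essentially the same route as the paper: concatenate $\gamma_x[x,x_r]$ with a polyline through the chain of at most $\hat N(J)$ balls in $\mathcal{B}_{j,r}$ to reach $y_r$, bound the length by $C_3(J)r$, use the half-radius structure of the balls in \eqref{ball collect} together with \eqref{radius larg} to get $\dist(z,K)\ge r/J$ along the polyline, set $J'=C_3 J$, and finish with Proposition~\ref{length car pro}. The only cosmetic difference is that the paper routes the polyline through the \emph{centers} of consecutive balls (so each segment lies in the union $D\cup D'$ of two balls, cf.\ \eqref{length estimate 2}), whereas you route it through intersection points $p_i\in D_{i-1}\cap D_i$ (so each segment lies in a single ball); both choices give the same bounds.
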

	\begin{proof}
		Suppose that $ V_{j,r}$ is non-empty, and fix a point $y\in V_{j,r}$. Then the corresponding escaping point $y_r\in \gamma_y\cap \partial B_{3r}$ is covered by some ball $D_1\in \mathcal{B}_{j,r}$ according to \eqref{bescovitch}.  Then we can join $y_r$  to the center $\hat{x}_1$ of $D_1$ by the line segment $L_{y_r,\hat{x}_1}\subset D_1$. 
		
		Now for any $x\in V_{j,r}$, we claim that there exists  a rectifiable curve $\beta_x\subset \Omega\setminus K$ as the core of a $J'$-carrot joining $x$ to $y_r$, such that $\gamma_{x}[x,x_r]\subset \beta_x$ and
		\begin{equation}\label{base point ball}
			car(\beta_{x},J')\subset \Omega\setminus K.
		\end{equation}
		
		Indeed,  the escaping point $x_r\in  \partial B_{3r}\cap D_2$ is also covered by another ball $D_2\in\mathcal{B}_{j,r}$ as $x\in V_{j,r}$. Moreover, we can also joint $x_r$  to the center $\hat{x}_2$ of $D_2$ by the line segment $L_{x_r,\hat{x}_2}\subset D_2$.

		Recall that $U_{j,r}$ is connected and consists of at most $\hat{N}$-many  balls from $\mathcal{B}_{j,r}$.  
		This implies that $\hat{x}_1$ and $\hat{x}_2$ can be joined by a union of at most  $\hat {N}$-many line segments  with the endpoints being the centers of  balls in  $\mathcal{B}_{j,r}$. Therefore, combining with $L_{y_r,\hat{x}_1}$ and $L_{x_r,\hat{x}_2}$, we can join $x_r$ to $y_r$ by a polyline $\gamma_{x_r,y_r}$.  
		
		We show that 
		$$\beta_x:=\gamma_x[x,\,x_r]\cup \gamma_{x_r,\,y_r}$$
		is the desired John curve. 
		To this end,  we estimate the length of $\beta_x$ and the distance $\dist(\eta,K)$ for any $\eta\in \beta_x$, respectively. 
        
        We start with the estimate on the length of $\beta_x$. Thanks to \eqref{radius larg}, for any pair of intersecting balls $D,D'\in \mathcal{B}_{j,r}$,  the line segments $L$ joining the center of $D$ with radius $s$ to the center of $D'$ with radius $s'$ satisfies 
		\begin{equation}\label{length estimate 2}
			L\subset D\cup D'\quad \text{and}\quad \ell(L)\le s+s'\le 4r.
		\end{equation}
		In particular, \eqref{radius larg} together with the facts that $L_{x_r,\hat{x}_2}\subset D_2$ and that $L_{y_r,\hat{x}_1}\subset D_1$ also yields
		$\ell(L_{x_r,\hat{x}_2})\le 2r$, $\ell(L_{y_r,\hat{x}_1})\le 2r$. 
		Therefore employing   \eqref{length estimate 2} and \eqref{lower size},  the construction of $\beta_x$ tells
		\begin{align}\label{length beta}
			\ell(\beta_x)&\le \ell(\gamma_x[x,\,x_r])+\ell(\gamma_{x_r,\,y_r}) \nonumber\\
			&\le J\dist(x_r,\,K)+\ell(L_{x_r,\hat{x}_2})+\ell(L_{y_r,\hat{x}_1})+4 \hat N r \nonumber\\
			&\le C(J) r=:C_3r;
		\end{align}
		we may assume that	$C_3\ge 4$. 
		This gives the first part of the proposition.
		
		Towards \eqref{close omega}, for any $\eta\in \beta_x$ we need to estimate the distance $\dist(\eta,K)$ from above.  First of all,  note that  when $\eta\in\gamma_{x_r,y_r}$, there exists  some ball $D_{\eta}\in \mathcal{B}_{j,r}$ containing $\eta$. Then combining \eqref{lower size}, \eqref{ball collect} and \eqref{radius larg}, we get 
		\begin{equation}\label{small ball}
			\dist(\eta,K)\ge \dist(D_\eta,\,K)\ge  \frac{r}{J}.
		\end{equation}
		Let 
		\begin{equation}\label{carrot construct 2}
			J':=C_3 J.
		\end{equation}
		Then  combining \eqref{lower size}, \eqref{length beta} and \eqref{small ball},   we conclude
		$$\ell(\beta_x[x,\eta])\le \ell(\beta_x)\le C_3 r\le  J'\dist(\eta,K) \quad \text{ when } \eta\in\gamma_{x_r,\,y_r}. $$
        
		On the other hand,  when  $\eta\in \gamma_x[x,\,x_r]$, since our construction yields $\beta_x[x,\,\eta]=\gz_x[x,\,\eta]$, which is particularly contained in a John curve, it follows that
		$$ \ell(\beta_x[x,\eta])\le J \dist(\eta,\,K)\le J'\dist(\eta,K).$$
		This   implies \eqref{base point ball}.
		Moreover by Proposition~\ref{length car pro}, 
		every point $w\in car(\beta_x,J')$ also can be joined to $y_r$ by a rectifiable curve $\hat{\gamma}_w$ satisfying 	$$\ell(\hat{\gamma}_w)\le \ell(\beta_x)\quad \text{and }\quad car(\hat{\gamma}_w,J')\subset car(\beta_x,J').$$
		Hence, by employing  \eqref{base point ball}, the arbitrariness of $x$ gives the second formula in  \eqref{close omega}. The first formula in \eqref{close omega} holds due to $x\in Cl(car(\beta_x,J'))$, the closure of the carrot,  for any $x\in V_{j,r}$.
	\end{proof}

Now Lemma~\ref{local john decompose} follows immediately from  Proposition~\ref{omega cons}, where $w_{j,\,r}$ is chosen to be $y_r$ in the Proposition~\ref{omega cons}.


\begin{thebibliography}{99}
\bibitem{AFP19971}
L. Ambrosio, D. Pallara, \emph{Partial Regularity of Free Discontinuity Sets I}. Ann. Sc. Norm. Super. Pisa  24.1  (1997):1-38. 

\bibitem{AFP19972}
L. Ambrosio,  N. Fusco, and D. Pallara, \emph{Partial regularity of free discontinuity sets, II}.  Ann. Sc. Norm. Super. Pisa 24.1  (1997): 39-62. 

\bibitem{AFH2003}
L. Ambrosio, N. Fusco, J.E. Hutchinson, \emph{Higher integrability of the gradient and dimension of the singular set for minimisers of the Mumford--Shah functional}, Calc. Var. Partial Differ. Equ. 16 (2003) 187-215.

\bibitem{AFP2000}
L. Ambrosio, N. Fusco, D.  Pallara, \emph{Functions of bounded variation and free discontinuity problems}. Oxford Mathematical Monographs. The Clarendon Press, Oxford University Press, New York, 2000.

\bibitem{AM2019}
J. Andersson, H. Mikayelyan, \emph{Regularity up to the Crack-Tip for the Mumford--Shah problem}, Preprint, arXiv :1512 .05094.


\bibitem{B1988}
 B. Bojarski,
\emph{Remarks on Sobolev imbedding inequalities}. Complex analysis, Joensuu 1987, 52--68,
Lecture Notes in Math., 1351, Springer, Berlin, 1988.

\bibitem{B1996}
A. Bonnet, \emph{On the regularity of edges in image segmentation,} Ann. Inst. H. Poincar\'e C Anal. Non Lin\'eaire 13 (1996), no. 4, 485--528.






\bibitem{BD2001}
 A. Bonnet and G. David. \emph{Cracktip is a global Mumford--Shah minimizer}.
Ast\'erisque, 274:vi+259, 2001.



\bibitem{BL2014}
D. Bucur, S. Luckhaus, 
\emph{Monotonicity formula and regularity for general free discontinuity problems.}
Arch. Ration. Mech. Anal. 211 (2014), no. 2, 489--511.

%
%
%
%
%
%
%
%
%
\bibitem{D1996}
G. David, \emph{$C^1$-arcs for minimizers of the Mumford--Shah functional}, SIAM J. Appl. Math. 56 (3) (1996) 783--888.

\bibitem{DL2002}
G. David, J.-C. L\'eger, 
\emph{Monotonicity and separation for the Mumford--Shah problem.}
Ann. Inst. H. Poincar\'e C Anal. Non Lin\'eaire 19 (2002), no. 5, 631--682.



\bibitem{D2006}
G. David \emph{Singular sets of minimizers for the Mumford--Shah functional}. Springer Science \& Business Media, 2006.
%

\bibitem{DA1988}
E. De Giorgi, L. Ambrosio, \emph{Un nuovo funzionale del calcolo delle variazioni}.
Atti Accad. Naz. Lincei Cl. Sci. Fis. Mat. Natur. Rend. Lincei (8) Mat. Appl.
82 (1988), 199--210.

\bibitem{DCL1989}
E. De Giorgi, M. Carriero,  A. Leaci, \emph{Existence theorem for a minimum
problem with free discontinuity set}.  Arch. Rational Mech. Anal. 108 (1989),
195--218. 





\bibitem{DF2023}
C. De Lellis, M. Focardi, \emph{The regularity theory for the Mumford--Shah functional on the plane}, arXiv:2308.14660

\bibitem{DFG2021}
C. De Lellis,  M. Focardi,  S. Ghinassi, 
\emph{Endpoint regularity for 2d Mumford--Shah minimizers: on a theorem of Andersson and Mikayelyan.} J. Math. Pures Appl. (9) 155 (2021), 83--110.

\bibitem{DFR2014}
C. De Lellis, M. Focardi, B. Ruffini, \emph{A note on the Hausdorff dimension of the singular set for minimizers of the Mumford--Shah energy}, Adv. Calc. Var. 7(4) (2014) 539-545.




\bibitem{F2003}
N. Fusco, \emph{An overview of the Mumford--Shah problem}. Milan Journal of Mathematics, \textbf{71}, (2003), 95-119.
%
 
%
\bibitem{HK2000}
P. Hajlasz and P. Koskela. \emph{Sobolev met Poincar\'e}, Mem. Amer. Math. Soc.
145, (2000).

%
\bibitem{L2011}
A. Lemenant, \emph{Regularity of the singular set for Mumford--Shah minimizers in $\mathbb R^3$ near a minimal cone}, Ann. Sc. Norm. Super. Pisa, Cl. Sci. 10(3) (2011) 561-609.


%



\bibitem{MS1989}
D. Mumford,  J. Shah, \emph{Optimal approximation by piecewise smooth functions and associated variational problems}. Comm. Pure Appl. Math. 17 (1989), 577--685.

\bibitem{NV1991}
R. N\"akki,  J. V\"ais\"al\"a, 
 \emph{John disks}.
Exposition. Math. 9 (1991), no. 1, 3--43.
 

\bibitem{SZ2024}
W. Su, Y. R.-Y. Zhang, \emph{Carrot John domains in variational problems}. 	arXiv:2401.08133



%
%


\end{thebibliography}
\end{document}